\theoremstyle{plain}
\newtheorem{theorem}[subsection]{{\bf Theorem}}
\newtheorem*{theorem*}{{\bf Theorem}}
\newtheorem*{corollary*}{{\bf Corollary}}
\newtheorem{proposition}[subsection]{{\bf Proposition}}
\newtheorem{lemma}[subsection]{{\bf Lemma}}
\theoremstyle{definition}
\theoremstyle{remark}
\numberwithin{equation}{subsection}
\newcommand{\N}{\mathcal{N}}
\newcommand{\B}{\mathcal{B}}
\newcommand{\V}{\mathcal{V}}
\newcommand{\A}{\mathcal{A}}
\begin{document}
\title[Finite $p$-groups in varieties]{On finite $p$-groups satisfying given laws}
\author{Primo\v z Moravec}
\address{{
Faculty of Mathematics and Physics\\
University of Ljubljana \\
Slovenia}}
\email{primoz.moravec@fmf.uni-lj.si}
\subjclass[2010]{20D15, 20E10}
\keywords{Finite $p$-groups, varieties of groups, coclass, powerful $p$-groups}
\thanks{The author acknowledges the financial support from the Slovenian Research Agency (research core funding No. P1-0222, and projects No. J1-8132, J1-7256 and N1-0061).}
\date{\today}
%%%%%%%%%%%%%%%%%%%%%%%%%%%%%%%%%%%%%%%%%%%%%%%%%%%%%%%%%%%%%%%%%%%%%
\begin{abstract}

A variety of groups does not contain all metabelian groups if and only if there is an absolute bound for the nilpotency classes of powerful $p$-groups in the given variety. Similarly, a variety contains only finitely many finite $p$-groups of any given coclass if and only if not every group that is an extension of an abelian group by an elementary abelian $p$-group belongs to that variety.

\end{abstract}
%%%%%%%%%%%%%%%%%%%%%%%%%%%%%%%%%%%%%%%%%%%%%%%%%%%%%%%%%%%%%%%%%%%%%%%
\maketitle
%%%%%%%%%%%%%%%%%%%%%%%%%%%%%%%%%%%%%%%%%%%%%%%%%%%%%%%%%%%%%%%%%%%%%%%
\section{Introduction}
\label{s:intro}

\noindent
There is a plethora of results in the theory of varieties of groups describing varieties with no subvarieties of specific type. One of these is the following result of Groves:

\begin{theorem}[Groves \cite{Grov71,Grov72}]
  \label{t:groves}
  Let $\V$ be a solvable variety of groups.
  \begin{enumerate}
    \item If $\V$ does not contain the variety $\A\A$ of all metabelian groups, then $\V$ is finite-exponent-by-nilpotent-by-finite-exponent.
    \item If $\V$ does not contain $\A\A_p$ for every prime $p$, then $\V$ is finite-exponent-by-nilpotent.
    \item  If $\V$ does not contain $\A_p\A$ for every prime $p$, then $\V$ is nilpotent-by-finite-exponent.
  \end{enumerate}
\end{theorem}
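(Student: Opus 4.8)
The plan is to translate everything into the standard dictionary between varieties and fully invariant (verbal) subgroups of a free group $F$ of countably infinite rank, under which $\V \mapsto V(F)$ and inclusions reverse: $\mathcal{U} \subseteq \mathcal{W}$ iff $U(F) \supseteq W(F)$. Here $\A\A \subseteq \V$ is equivalent to $V(F) \le F''$, since $F''$ is exactly the set of words holding as laws in every metabelian group; thus the hypothesis of (1) supplies a word $w \in V(F)\setminus F''$ whose image $\bar w$ in the free metabelian group $F/F''$ is a nontrivial \emph{metabelian law} of $\V$. Dually, the conclusions become lower bounds on $V(F)$ encoding a product decomposition: e.g. $\V \subseteq \N_c\B_e$ corresponds to $V(F)\ge \gamma_{c+1}(F^e)$, and the general $\B_e\N_c\B_f$ is analogous. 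So the whole problem reduces to converting a single nontrivial law into \emph{absolute} bounds on an exponent $e$, a class $c$, and (in (1)) a second exponent $f$.

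First I would analyse the metabelian section as a module. Because a law involves only finitely many variables, I may restrict to a free metabelian group of finite rank $n$; there the derived factor $F'/F''$ is a module over the Laurent polynomial ring $R=\mathbb{Z}[t_1^{\pm 1},\dots,t_n^{\pm 1}]=\mathbb{Z}[F/F']$, and the Magnus embedding realises $F/F''$ concretely as $2\times 2$ matrices over $R$, turning $\bar w$ into a nonzero module element annihilated under every endomorphic substitution. Since $R$ is commutative Noetherian, the fully invariant submodule generated by the coefficients of $\bar w$ has a controlled support/primary decomposition, and a single nonzero relation forces the module — after boundedly many lower-central steps — either to be annihilated by a fixed integer (bounded exponent of the derived factors) or to be concentrated in boundedly many central layers (bounded nilpotency class). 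This dichotomy is the origin of the outer exponent factors and the inner nilpotent factor of $\B_e\N_c\B_f$.

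Next I would bootstrap to arbitrary solvable $\V$ by induction on the derived length $d$, where $\V \subseteq \A^d$, applying the module analysis to the top factor $F^{(d-1)}/F^{(d)}$ regarded over the group ring of $F/F^{(d-1)}$ and then checking that the extracted exponent and class bounds propagate downward through the product operation on varieties, using associativity of products $\mathcal{U}\mathcal{W}$ and the fact that $\B_e\N_c\B_f$ is finitely based to recombine the bounds. For (2) and (3) the sharper hypothesis — that $\A\A_p$, respectively $\A_p\A$, fails to lie in $\V$ for \emph{every} prime $p$ — removes the obstruction simultaneously at all primes, and the same module computation then annihilates one of the two outer exponent factors: the top factor $\B_f$ in (2), yielding $\B_e\N_c$, and the bottom factor $\B_e$ in (3), yielding $\N_c\B_e$.

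The main obstacle, I expect, is precisely the quantitative and uniform control demanded by the inductive step. In the metabelian base case the ring $R$ is Noetherian, but at higher derived length the relevant group rings (of free solvable groups of finite rank) are no longer Noetherian, so the finiteness that converts module relations into honest group laws must be recovered by hand; showing that one nontrivial law produces absolute bounds on \emph{both} exponent and class, and that these bounds survive passage through the lower solvable layers without degrading, is the delicate heart of the argument. Once the uniform bounds are secured, the product decomposition and the prime-by-prime refinements of (2) and (3) are comparatively formal.
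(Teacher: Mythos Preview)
The paper does not prove this theorem: it is quoted from Groves \cite{Grov71,Grov72} and used as a black box (in the proof of Theorem~\ref{t:coclass}, where only part (2) is invoked). There is therefore no proof in the paper to compare your proposal against.

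As to the proposal itself, what you have written is a plausible strategic outline --- translating laws into module relations over $\mathbb{Z}[F/F']$ via the Magnus embedding, extracting exponent and class bounds from the Noetherian structure in the metabelian case, and then inducting on derived length --- and this is broadly in the spirit of how such results are approached. But it remains a sketch rather than a proof: the ``dichotomy'' you describe (bounded exponent versus bounded class after finitely many lower-central steps) is asserted rather than derived, and you yourself flag that the inductive step over non-Noetherian group rings of free solvable groups is where the real work lies. Groves' actual arguments are substantially more intricate than this outline suggests, and in particular the passage from a single nontrivial metabelian law to uniform bounds on $e$, $c$, $f$ is not a routine primary-decomposition exercise. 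If you want to supply a self-contained proof, you would need to fill in precisely the steps you have identified as delicate; otherwise, citing \cite{Grov71,Grov72} as the present paper does is the appropriate course.
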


Here $\A$ denotes the variety of all abelian groups, $\A_p$ the variety of all elementary abelian $p$-groups, and the notations regarding the calculus of varieties closely follow Hanna Neumann's book \cite{Neu67} here and throughout the paper. As the varieties $\A\A$, $\A\A_p$ and $\A_p\A$ are generated by $C_\infty\wr C_\infty$, $C_\infty\wr C_p$ and $C_p\wr C_\infty$, respectively, the conditions on $\V$ in the above Groves' theorem are straightforward to verify, given the laws of $\V$.

Traustason \cite{Tra05} extended the above result to arbitrary varieties as follows:

\begin{theorem}[Traustason \cite{Tra05}]
\label{t:traustason}
Let $\V$ be a variety of groups.
\begin{enumerate}
  \item $\A\A\not\subseteq\V$ if and ony if there exists a constant $c$ such that every group in $\V$ which is residually a finite $p$-group for every prime $p$ is nilpotent of class $\le c$.
  \item  $\A_p\A\not\subseteq\V$ for every prime $p$ if and only if
  there exist constants $c$ and $e$ such that every finitely generated virtually solvable group in $\V$ belongs to $\N_c\B_e$.
\end{enumerate}
\end{theorem}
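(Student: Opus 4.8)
The plan is to derive both equivalences from Groves' Theorem~\ref{t:groves} by intersecting $\V$ with a solvable variety: since each of $\A\A$ and $\A_p\A$ is itself solvable, it is contained in some $\A^n$, so $\A\A\subseteq\V$ iff $\A\A\subseteq\V\cap\A^2$, and likewise for $\A_p\A$. The two ``only if'' implications are the easy ones, each handled by exhibiting a group that generates the forbidden variety yet cannot meet the asserted conclusion. For (1), I would invoke the classical fact that the free metabelian group $M$ of rank $2$ is residually a finite $p$-group for every prime $p$ while being non-nilpotent; thus $\A\A\subseteq\V$ would place a non-nilpotent residually-finite-$p$ group in $\V$, contradicting the class bound. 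For (2), the restricted wreath product $C_p\wr C_\infty$ is finitely generated and metabelian, and its Fitting subgroup is the base group with infinite cyclic quotient, so it belongs to no $\N_c\B_e$; hence $\A_p\A\subseteq\V$ for even one prime $p$ would violate the uniform conclusion.

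For the ``if'' direction of (1), assume $\A\A\not\subseteq\V$ and let $G\in\V$ be residually a finite $p$-group for every prime $p$. A law holds in such a $G$ as soon as it holds in all finite $p$-quotients of $G$ for a single prime $p$, so it suffices to bound the nilpotency class of finite $p$-groups in $\V$ for one well-chosen $p$. Since $\A\A\subseteq\A^2$, the metabelian variety $\V\cap\A^2$ also omits $\A\A$, and Groves~(1) gives $\V\cap\A^2\subseteq\B_{e_1}\N_{c_0}\B_{e_2}$. Now fix any prime $p$ not dividing $e_1e_2$: for every finite $p$-quotient $Q$ of $G$ the metabelianization $Q/Q''$ lies in $\V\cap\A^2$ and is a finite $p$-group, so its exponent-$e_1$ and exponent-$e_2$ layers are trivial and $Q/Q''$ is nilpotent of class $\le c_0$, i.e.\ $\gamma_{c_0+1}(Q)\le Q''$. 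The remaining, and main, obstacle is to \emph{bootstrap} this to an absolute bound on the class of $Q$ itself. I would pass to the associated Lie ring and, for $p$ large, invoke the Lazard correspondence to convert the exclusion of $\A\A$ into a non-metabelian multilinear Lie identity; combined with $\gamma_{c_0+1}(Q)\le Q''$ this should force the Lie ring, hence $Q$, to have class bounded by an absolute constant $c$. Residual-finite-$p$-ness of $G$ then transfers the law $[x_1,\dots,x_{c+1}]=1$ to $G$.

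For the ``if'' direction of (2), assume $\A_p\A\not\subseteq\V$ for every prime $p$ and let $G\in\V$ be finitely generated and virtually solvable. Intersecting again, $\V\cap\A^n$ omits every $\A_p\A$, so Groves~(3) yields $\V\cap\A^n\subseteq\N_c\B_e$. A solvable normal subgroup $H$ of finite index lies in $\V\cap\A^n$ and is therefore nilpotent-by-(exponent $e$); the task is to promote this to $G\in\N_{c'}\B_{e'}$ with constants depending only on $\V$. Here the main obstacle is controlling the constants across the virtually-solvable extension: I would use finite generation to bound the index and the number of generators, extract a characteristic nilpotent normal subgroup of $G$ from the Fitting-type subgroup of $H$, and absorb the finite ``top'' together with the exponent-$e$ and finite-index data into new uniform constants $c',e'$, using standard facts about finitely generated nilpotent-by-(finite exponent)-by-finite groups. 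The uniformity of $c'$ and $e'$ over all such $G$ is exactly what the intersection with $\A^n$ and Groves' theorem provide on the solvable core.
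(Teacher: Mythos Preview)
The paper does not prove Theorem~\ref{t:traustason} at all: it is quoted as a result of Traustason and attributed to \cite{Tra05}, with no argument given. So there is no ``paper's own proof'' to compare against; the relevant comparison is with Traustason's original proof, which proceeds via the theory of $f$-Milnor groups (the same machinery the present paper invokes in the proof of Theorem~\ref{t:powerful}).

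On its own merits, your proposal has genuine gaps in both ``if'' directions.

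For part~(1), from Groves you correctly extract $\gamma_{c_0+1}(Q)\le Q''$ for finite $p$-quotients $Q$ when $p\nmid e_1e_2$, but this by itself does not bound the class of $Q$: it only bounds the class of $Q/Q''$. Your proposed bootstrap via the Lazard correspondence is circular, since Lazard requires the class to already be less than $p$, which is precisely what you are trying to establish; and passing instead to the associated graded Lie ring does not let you read class bounds back to the group without substantial further argument. Traustason's route avoids this entirely: the hypothesis $\A\A\not\subseteq\V$ is equivalent to the existence of a non-zero polynomial $f$ with every group in $\V$ being $f$-Milnor, and the nilpotency bound for residually-(finite $p$-group for all $p$) groups is then deduced from the structure theory of Milnor groups, not from Groves plus Lie methods.

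For part~(2), the difficulty you flag is fatal as stated. Applying Groves to $\V\cap\A^n$ yields constants $c,e$ that depend on $n$, and $n$ in turn depends on the derived length of the chosen solvable subgroup $H$ of finite index in $G$; neither that derived length nor the index $[G:H]$ is bounded in terms of $\V$ alone, so your ``absorb the finite top into new uniform constants'' step has no uniform input to work with. Again, the actual argument in \cite{Tra05} does not go through an a priori solvable truncation of $\V$; the Milnor-polynomial framework gives constants depending only on $f$, hence only on $\V$, from the outset.
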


Here $\N_c$ stands for the variety of all groups that are nilpotent of class $\le c$, and $\B_e$ is the variety of all groups of exponent dividing $e$. Note that the varieties $\V$ with the property that
$\A_p\A\not\subseteq\V$ for every prime $p$ are called {\it weak generalized Burnside varieties} by Traustason \cite{Tra11}, or {\it J-varieties} by Segal \cite{Seg09}, who derives further characterizations of these.
It easily follows from Theorem \ref{t:traustason} or \cite[Theorem 4.8.2]{Seg09} that J-varieties can be characterized by their finite $p$-groups:

\begin{proposition}
  \label{p:Jvar}
  A variety $\V$ is a J-variety if and only if there exist constants $c$ and $e$ such that, for every prime $p$, every finite $p$-group in $\V$ belongs to $\N_c\B_e$.
\end{proposition}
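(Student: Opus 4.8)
The plan is to prove the two implications separately, leaning on Theorem \ref{t:traustason}(2) for the forward direction and on a concrete family of finite $p$-groups generating $\A_p\A$ for the converse. Recall that, by definition, $\V$ is a J-variety precisely when $\A_p\A\not\subseteq\V$ for every prime $p$, so failing to be a J-variety means $\A_p\A\subseteq\V$ for at least one prime $p$.

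For the forward implication, suppose $\V$ is a J-variety. Then Theorem \ref{t:traustason}(2) supplies constants $c$ and $e$ such that every finitely generated virtually solvable group in $\V$ lies in $\N_c\B_e$. Since a finite $p$-group is in particular finitely generated and solvable, the same $c$ and $e$ work simultaneously for every finite $p$-group in $\V$ and every prime $p$. This direction is therefore immediate and requires no further work.

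The substance lies in the converse, which I would establish in contrapositive form: if $\V$ is not a J-variety, then no single pair $(c,e)$ can force all finite $p$-groups of $\V$ into $\N_c\B_e$. Fix a prime $p$ with $\A_p\A\subseteq\V$. I would exhibit explicit finite $p$-group witnesses inside $\A_p\A$, namely the wreath products $W_n=C_p\wr C_{p^n}$, whose base group is elementary abelian of exponent $p$ and whose top group $C_{p^n}$ is abelian, so that $W_n\in\A_p\A\subseteq\V$. The decisive point is that these finite $p$-groups already generate the whole variety: the surjection $C_\infty\twoheadrightarrow C_{p^n}$ induces a surjection $C_p\wr C_\infty\twoheadrightarrow W_n$ (folding the base over the fibres), and since the support of any fixed base element is embedded faithfully once $p^n$ is large, the intersection of these kernels is trivial. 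Thus $C_p\wr C_\infty$ is residually $\{W_n\}_n$ and so embeds into a direct product of the $W_n$; as varieties are closed under subgroups and products, this gives $C_p\wr C_\infty\in\mathrm{var}(\{W_n\})$, whence $\mathrm{var}(\{W_n\})=\A_p\A=\mathrm{var}(C_p\wr C_\infty)$.

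With the generation statement in hand the argument closes quickly: if some $(c,e)$ placed every $W_n$ into the variety $\N_c\B_e$, then $\A_p\A=\mathrm{var}(\{W_n\})\subseteq\N_c\B_e$, and in particular $C_p\wr C_\infty\in\N_c\B_e$. The hard part, and the only genuinely computational step, is to rule this out, i.e. to show $C_p\wr C_\infty\notin\N_c\B_e$ for all $c$ and $e$. Here I would identify the base group $B$ with the module $\mathbb{F}_p[x,x^{-1}]$ on which the top generator $t$ acts as multiplication by $x$. If $N\trianglelefteq C_p\wr C_\infty$ had nilpotency class $\le c$ with quotient of exponent dividing $e$, then $t^e\in N$, and nilpotency of $N$ would force $(x^e-1)^c$ to annihilate the submodule $N\cap B$; since $\mathbb{F}_p[x,x^{-1}]$ is an integral domain this yields $N\cap B=0$. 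Then $N$ embeds into the top group and is infinite cyclic, but a short conjugation computation shows that no infinite cyclic subgroup meeting $B$ trivially can be normal in $C_p\wr C_\infty$, a contradiction. I expect the generation identity $\mathrm{var}(\{W_n\})=\A_p\A$ and this domain argument to be the two places that need care; everything else is routine bookkeeping with the variety operations $\N_c$ and $\B_e$ and with Theorem \ref{t:traustason}.
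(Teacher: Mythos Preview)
Your argument is correct. The paper does not actually write out a proof of this proposition---it simply asserts that the statement ``easily follows from Theorem~\ref{t:traustason} or \cite[Theorem 4.8.2]{Seg09}''---so there is no detailed argument to compare against. Your forward direction is precisely the appeal to Theorem~\ref{t:traustason}(2) that the paper has in mind; for the converse you supply a complete self-contained argument where the paper gives none, and each step (the residual-finiteness of $C_p\wr C_\infty$ in $\{W_n\}$, the resulting identity $\mathrm{var}(\{W_n\})=\A_p\A$, the domain computation in $\mathbb{F}_p[x,x^{-1}]$, and the normality obstruction for an infinite cyclic complement) is sound. A slight shortcut is available: one can bypass the generation statement and work directly in $W_n$, since with $e=p^ve'$ and $p\nmid e'$ the verbal subgroup $W_n^e=W_n^{p^v}$ contains both $t^{p^v}$ and the base element $(x-1)^{p^v-1}$, whence a $c$-fold commutator with $t^{p^v}$ yields $(x-1)^{(c+1)p^v-1}$, nonzero in $\mathbb{F}_p[x]/\bigl((x-1)^{p^n}\bigr)$ whenever $p^n\ge (c+1)p^v$. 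This avoids passing through $C_p\wr C_\infty$, but your route is equally valid and isolates the key computation in an integral domain, which is arguably cleaner.
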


In this paper we find new characterizations of varieties not containing $\A\A$, or $\A\A_p$ for all primes $p$, in terms of their finite $p$-groups.

As for the first case, recall that a finite $p$-group is said to be {\it powerful} if $p$ is odd and $G'\le G^p$, or $p=2$ and $G'\le G^4$. Powerful $p$-groups were first systematically studied by Lubotzky and Mann \cite{Lub87}.
A more general notion is that of PF $p$-groups introduced by  Fern\'{a}ndez-Alcober, Gonz\'{a}lez-S\'{a}nchez, and  Jaikin-Zapirain \cite{Fer08}. A finite $p$-group is said to be  a {\it PF-group} if it has a
{\it potent filtration}, that is, a central series $G=G_1\ge G_2\ge\cdots \ge G_k=1$ with
$[G_i,{}_{p-1}G]\le G_{i+1}^p$ for all $i$. Our first result is the following:

\begin{theorem}
  \label{t:powerful}
  Let $\V$ be a variety of groups. The following are equivalent.
  \begin{enumerate}
    \item $\V$ does not contain the variety $\A\A$ of all metabelian groups.
    \item There exists a constant $c$ depending on $\V$ only such that, for every prime $p$, the nilpotency classes of finite powerful $p$-groups in $\V$ are bounded by $c$.
    \item There exists a constant $c$ depending on $\V$ and $p$ only such that, for every prime $p$, the nilpotency classes of finite PF $p$-groups in $\V$ are bounded by $c$.
  %  \item For every prime $p$, all powerful pro-$p$ groups in $\V$ are nilpotent, and their nilpotency classes are bounded.
  %  \item For every prime $p$, all PF pro-$p$ groups in $\V$ are nilpotent, and their nilpotency classes are bounded.
  \end{enumerate}
\end{theorem}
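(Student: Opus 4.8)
The plan is to establish the equivalences by proving $(2)\Rightarrow(1)$ and $(3)\Rightarrow(1)$ with explicit test groups, and $(1)\Rightarrow(2)$, $(1)\Rightarrow(3)$ by a Lie‑theoretic reduction to Traustason's Theorem~\ref{t:traustason}(1). For the two implications toward $(1)$ I would argue contrapositively and produce metabelian powerful $p$-groups of unbounded class. For an odd prime $p$ set $W_{p,n}=\langle a\rangle\rtimes\langle t\rangle$ with $|a|=p^{n}$, $a^{t}=a^{1+p}$, and $|t|$ equal to the order of $1+p$ modulo $p^{n}$; for $p=2$ replace $1+p$ by $1+4$. A direct computation gives $\gamma_{i+1}(W_{p,n})=\langle a^{p^{i}}\rangle$ (respectively $\langle a^{4^{i}}\rangle$), so the class of $W_{p,n}$ tends to infinity with $n$, while $W_{p,n}'=\langle a^{p}\rangle\le W_{p,n}^{p}$ (respectively $\le W_{p,n}^{4}$) shows that it is powerful, hence a PF group. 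As each $W_{p,n}$ is abelian-by-cyclic it is metabelian, so if $\A\A\subseteq\V$ then all $W_{p,n}$ lie in $\V$: fixing a single prime already destroys the PF bound of $(3)$, and letting $p$ and $n$ vary destroys the uniform powerful bound of $(2)$. This gives $\neg(1)\Rightarrow\neg(2)$ and $\neg(1)\Rightarrow\neg(3)$.

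The substance is $(1)\Rightarrow(3)$, and then $(2)$. Assuming $\A\A\not\subseteq\V$, Theorem~\ref{t:traustason}(1) furnishes a constant $c_{0}$ bounding the class of every group in $\V$ that is residually a finite $q$-group for every prime $q$; in particular every finitely generated torsion-free nilpotent group in $\V$ is nilpotent of class $\le c_{0}$. The idea is then to show that a PF (respectively powerful) $p$-group of class $c$ in $\V$ forces the existence inside $\V$ of a torsion-free nilpotent group of class comparable to $c$, after which Traustason bounds $c$. The bridge is the correspondence between PF $p$-groups and nilpotent Lie rings: the potent filtration makes the associated graded Lie ring of $G$ behave like that of a uniformly powerful group, so one can pass through its $\mathbb{Z}_{p}$-Lie lattice and a Malcev completion to a torsion-free nilpotent model of the same class. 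Because in Malcev coordinates a group law is a system of polynomial identities and the image of the lattice is Zariski-dense in the completion, the laws satisfied by $G$ should propagate to the model; as the model is torsion-free nilpotent it is residually a finite $q$-group for every $q$, and Traustason yields the bound. Since the range of validity of the correspondence depends on $p$, this gives $(3)$ with a $p$-dependent constant.

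To upgrade to the uniform bound of $(2)$ I would split by the size of the prime. For $p>c_{0}+1$ the truncation $G/\gamma_{c_{0}+2}(G)$ is a powerful $p$-group of class below $p$, so the classical Lazard correspondence applies verbatim and performs the lift above, bounding the class by $c_{0}$. For the finitely many primes $p\le c_{0}+1$ the per-prime bounds of $(3)$ apply, giving class at most $\max_{p\le c_{0}+1}c_{0}(p)$; the maximum of the two quantities is a bound independent of $p$, which is $(2)$. Throughout, the elementary inequality $\cl(G)\le\log_{p}\exp(G)$ for powerful $p$-groups—a consequence of $\gamma_{i}(G)\le G^{p^{i-1}}$~\cite{Lub87}—is convenient, both to dispose of the finite-exponent situation directly and to control the truncations used in the large-prime reduction.

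I expect the main obstacle to be exactly the transport of the laws of $\V$ across the correspondence, i.e.\ verifying that the torsion-free nilpotent model genuinely lies in $\V$ and genuinely retains the class $c$. The delicate point is that the reduction to Traustason only constrains the torsion-free part of $\V$: a priori the extra nilpotency of a powerful or PF $p$-group could be hidden entirely in torsion, and ruling this out is what requires the structure theory of PF groups from~\cite{Fer08}—to guarantee that the Lie lattice is $\mathbb{Z}_{p}$-free of the correct rank and class and that the uniform group through which the laws of $\V$ are inherited sits correctly between $G$ and its completion. Handling this for all classes at once, rather than only in the Lazard range $c<p$, is precisely what forces the bound in $(3)$ to depend on $p$ and is the reason the uniform statement $(2)$ is available only for powerful groups, where the large-prime reduction can be invoked.
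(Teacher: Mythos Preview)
Your contrapositive arguments for $(2)\Rightarrow(1)$ and $(3)\Rightarrow(1)$ via the metacyclic groups $W_{p,n}$ are correct and are exactly what the paper does (it simply cites that odd-$p$ metacyclic groups are powerful). The difficulty is entirely in the other direction, and there your plan has a genuine gap that you yourself flag but do not close. The Lazard correspondence attached to a finite powerful or PF $p$-group produces a Lie ring on the \emph{same} finite $p$-torsion abelian group, not a free $\mathbb{Z}_p$-lattice; there is no canonical ``lift'' of a given finite $p$-group to a uniform pro-$p$ group in $\V$ of the same class. Even if you had such a uniform group $U\in\V$, it is only residually a finite $p$-group for that one prime $p$, so Theorem~\ref{t:traustason}(1) does not apply to it; and the Zariski-density step would at best place the Malcev completion over $\mathbb{Q}_p$ in $\V$, which is still not residually $q$ for $q\neq p$. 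Descending from $\mathbb{Q}_p$ to a finitely generated torsion-free nilpotent $\mathbb{Z}$-group in $\V$ of class $c$ is precisely the missing step, and nothing in \cite{Fer08} supplies it. Consequently your derivation of $(2)$ is circular: it invokes $(3)$ for small primes, but $(3)$ is not established.

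The paper bypasses all of this by drawing on a sharper consequence of $\A\A\not\subseteq\V$ than Theorem~\ref{t:traustason}(1). From \cite[Theorem~4.9]{Tra05} one gets a nonzero $f\in\mathbb{Z}[X]$ such that every group in $\V$ is $f$-Milnor, and then \cite[Theorem~3.19]{Tra05} yields absolute constants $c,e$ with $\gamma_{c+1}(G^{e})=1$ for every finite $G\in\V$. For a powerful or PF $p$-group this becomes $\gamma_{c+1}(G)^{p^{v}}=1$ with $p^{v}\,\|\,e$, and an elementary lemma (using that $\gamma_{c+1}(G)$ is powerfully, respectively PF-, embedded and the identities $[N^{p^i},G^{p^j}]^{p^k}=[N,G]^{p^{i+j+k}}$) converts ``$\gamma_{c+1}(G)$ has exponent dividing $p^{v}$'' into ``$\cl(G)\le c+v$'' in the powerful case and ``$\cl(G)\le c+v(p-1)$'' in the PF case. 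So the actual input is not a model-theoretic transfer through Lie theory but the uniform law $\gamma_{c+1}(G^{e})=1$ coming from Milnor-group theory; once you have that, the powerful/PF structure does the rest in two lines.
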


Theorem \ref{t:powerful} follows fairly straightforward from Traustason's work on Milnor groups \cite{Tra05}. The details are given in Section \ref{s:powerful}.
In particular, Theorem \ref{t:powerful} generalizes a result by Abdollahi and Traustason \cite{Abd02} stating that the nilpotency class of $n$-Engel powerful $p$-groups can be bounded in terms of $n$ only. A consequence of the above theorem is also the fact that $\A\A\not\subseteq \V$ if and only if there is a constant $c$ such that for every prime $p$, every powerful pro-$p$ group in $\V$ is nilpotent of class $\le c$.

The second main result of this paper deals with finite $p$-groups of fixed coclass belonging to a variety $\V$ that does not contain $\A\A_p$ for every prime $p$.
Let $G$ be a group of order $p^n$, $n\ge 3$, and nilpotency class $c$. Then $r=n-c$ is called the {\it coclass} of $G$. Using coclass as the main invariant to study finite $p$-groups has proved to be very fruitful.
Here we prove:

\begin{theorem}
  \label{t:coclass}
  Let $\V$ be a variety of groups. The following are equivalent.
  \begin{enumerate}
    \item $\V$ does not contain the variety $\A\A_p$ for every prime $p$.
    \item For every prime $p$ and positive integer $r$, there are only finitely many finite $p$-groups of coclass $r$ that belong to $\V$.
    \item Every pro-$p$ group of finite coclass that belongs to $\V$ is finite.
  \end{enumerate}
\end{theorem}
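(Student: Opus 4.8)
The plan is to prove the three conditions equivalent one prime at a time: fixing $p$, I would show that the restrictions of (1), (2) and (3) to $p$ are equivalent, the stated theorem then following by quantifying over $p$. Throughout I read ``a pro-$p$ group belongs to $\V$'' as ``all its finite continuous quotients belong to $\V$''; since a variety is closed under subgroups, quotients and Cartesian products, it is closed under inverse limits, which makes this reading compatible with (3). I would organise everything around (3) as a hub, proving $(3)\Leftrightarrow(1)$ and $(3)\Leftrightarrow(2)$. Two directions are soft. For $\neg(1)\Rightarrow\neg(3)$, if $\A\A_p\subseteq\V$ then the infinite pro-$p$ group $S=\mathbb{Z}_p[\zeta_p]\rtimes C_p$, with a generator of $C_p$ acting by multiplication by $\zeta_p$, has coclass $1$ and is abelian-by-$C_p$, hence lies in $\A\A_p\subseteq\V$, contradicting (3). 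For $\neg(3)\Rightarrow\neg(2)$, if $G\in\V$ is an infinite pro-$p$ group of coclass $r$, the quotients $G/\gamma_n(G)$ are pairwise non-isomorphic finite $p$-groups in $\V$ of coclass at most $r$, so infinitely many share one coclass, contradicting (2).

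For $\neg(2)\Rightarrow\neg(3)$ I would invoke the coclass theorems (Leedham-Green, Shalev, McKay, Zelmanov): for fixed $p$ and $r$ the finite $p$-groups of coclass $r$ form finitely many coclass trees together with finitely many further groups, each tree having finite branches off a single infinite mainline. Given infinitely many coclass-$r$ groups of $\V$, one tree $\mathcal T$ contains infinitely many of them. As $\V$ is quotient-closed and the parent map of the coclass graph is given by quotients, the ancestors of these groups again lie in $\V$, so they span an infinite, finitely branching subtree of $\mathcal T$ all of whose vertices lie in $\V$. K\"onig's lemma yields an infinite descending chain, which—branches being finite—is cofinal with the mainline; its inverse limit is the infinite pro-$p$ group of coclass $r$ attached to $\mathcal T$, and it lies in $\V$, contradicting (3).

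The crux is $\neg(3)\Rightarrow\neg(1)$. Let $G\in\V$ be an infinite pro-$p$ group of finite coclass. By the structural coclass theorem, $G$ is a $p$-adic space group: it has an abelian normal subgroup $T\cong\mathbb{Z}_p^d$ of finite index with $C_G(T)=T$, so the finite $p$-group $P=G/T\neq 1$ acts faithfully on $T$. Choosing $x\in G$ with $\bar x\in P$ of order $p$, the nontrivial action of $\langle\bar x\rangle$ on $T\otimes\mathbb{Q}_p$ forces a faithful constituent $\mathbb{Q}_p(\zeta_p)$, hence an $\langle\bar x\rangle$-invariant sublattice $L\cong\mathbb{Z}_p[\zeta_p]$ on which $x$ acts as $\zeta_p$. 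Setting $M=\langle L,x\rangle\cap T$ and $M_0=C_M(\bar x)$, one checks that $x^p\in M_0$ and $M/M_0\cong\mathbb{Z}_p[\zeta_p]$, so that $\langle L,x\rangle/M_0\cong S$; thus $S$ is a subquotient of $G$, and all finite quotients of $S$ lie in $\V$. It then remains to show that $S$ generates $\A\A_p$. For this I would use the $C_p$-equivariant map $\mathbb{Z}[C_p]\hookrightarrow\mathbb{Z}\oplus\mathbb{Z}[\zeta_p]$ (augmentation together with reduction modulo the $p$-th cyclotomic polynomial), which is an isomorphism after $\otimes\mathbb{Q}$ and hence injective; it induces an embedding $C_\infty\wr C_p\hookrightarrow(\mathbb{Z}\times C_p)\times(\mathbb{Z}[\zeta_p]\rtimes C_p)$. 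Now $\mathbb{Z}[\zeta_p]\rtimes C_p$ has the same finite quotients as $S$, and $\mathbb{Z}\times C_p$ is abelian (every finite abelian $p$-group being a subquotient of a finite quotient of $S$), so both factors lie in the variety $\operatorname{var}(S)$ generated by $S$; hence $C_\infty\wr C_p\in\operatorname{var}(S)\subseteq\V$, i.e. $\A\A_p\subseteq\V$.

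I expect the main obstacle to be this last implication, and specifically its interface with coclass theory: one must quote the deep coclass theorems in precisely the form needed—the $p$-adic space group structure for the crux, and the finiteness and eventual periodicity of coclass trees for $\neg(2)\Rightarrow\neg(3)$—and then carry out the lattice bookkeeping that realises $S$ as a subquotient of $G$ together with the rational splitting $\mathbb{Q}_p[C_p]=\mathbb{Q}_p\oplus\mathbb{Q}_p(\zeta_p)$ that upgrades $\operatorname{var}(S)$ to all of $\A\A_p$. The soft directions and the K\"onig argument are routine once the coclass-graph facts are in hand.
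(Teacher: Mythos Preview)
Your argument is correct, and for the easy directions and for $(2)\Leftrightarrow(3)$ it matches the paper almost verbatim (the paper argues $(3)\Rightarrow(2)$ by noting that each mainline eventually leaves $\V$, which is the contrapositive of your K\"onig argument). The genuine divergence is in the hard implication. The paper proves $(1)\Rightarrow(2)$ directly and entirely on the finite side: the derived-length bound for coclass $r$ makes the subvariety $\V_{p,r}$ generated by the coclass-$r$ $p$-groups in $\V$ solvable, Groves' theorem then gives $\V_{p,r}\subseteq\B_{p^e}\N_c$, and Shalev's effective results on the lower central series of coclass-$r$ groups (powerfulness of $\gamma_m(G)$ and $\gamma_i(G)^p=\gamma_{i+d}(G)$ for large $i$) convert $\gamma_{c+1}(G)^{p^e}=1$ into an explicit bound on $|G|$. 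You instead prove $(1)\Rightarrow(3)$ by contrapositive, invoking the $p$-adic space-group structure of an infinite pro-$p$ group of finite coclass, carving out $S=\mathbb{Z}_p[\zeta_p]\rtimes C_p$ as a subquotient via the decomposition $\mathbb{Q}_p[C_p]\cong\mathbb{Q}_p\oplus\mathbb{Q}_p(\zeta_p)$, and then showing $\operatorname{var}(S)=\A\A_p$ through the embedding $C_\infty\wr C_p\hookrightarrow\mathbb{Z}\times(\mathbb{Z}[\zeta_p]\rtimes C_p)$. Your route avoids Groves' theorem altogether and yields the pleasant by-product $\operatorname{var}(S)=\A\A_p$, at the cost of quoting the full structural description of infinite pro-$p$ groups of finite coclass (including $C_G(T)=T$, which needs the just-infinite property) rather than Shalev's finite computations; the paper's route stays with finite $p$-groups throughout and produces explicit bounds, but imports Groves as a black box. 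Both rest on comparably deep coclass theory, just different pieces of it.
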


\section{Powerful $p$-groups}
\label{s:powerful}

\noindent
Let $N$ be a normal subgroup of a finite $p$-group $G$. According to \cite{Lub87}, we say that $N$ is {\it powerfully embedded} in $G$ if $p$ is odd and $[N,G]\le N^p$, or $p=2$ and $[N,G]\le N^4$. Similarly, we say that $N$ is {\it PF-embedded} in $G$ if there exists a potent filtration of $G$ going through $N$, cf \cite{Fer08}.

\begin{lemma}[\cite{Lub87, Fer08}]
  \label{l:pepf}
  Let $G$ be a finite $p$-group and $N$ a normal subgroup of $G$ that is powerfully embedded (PF-embedded) in $G$.
  \begin{enumerate}
    \item $[N,G]$ and $N^p$ are powerfully embedded (PF-embedded) in $G$.
    \item $[N^{p^i},G^{p^j}]^{p^k}=[N,G]^{p^{i+j+k}}$.
    \item $N^{p^i}=\{ n^{p^i}\mid n\in N\}$.
    \end{enumerate}
    \end{lemma}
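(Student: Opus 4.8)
The plan is to prove the three assertions simultaneously by induction on $|G|$, handling the powerful and the PF cases in parallel, because the parts are logically intertwined: part (2) is the engine that drives (1), while (1) supplies the inductive step that closes (2), and (3) feeds on both. The computational tools throughout are the basic commutator identities $[xy,z]=[x,z]^{y}[y,z]$ and $[x,yz]=[x,z][x,y]^{z}$, together with the Hall--Petrescu collection formula $(xy)^{p}=x^{p}y^{p}c_{2}^{\binom{p}{2}}\cdots c_{p-1}^{\binom{p}{p-1}}c_{p}$ with $c_{i}\in\gamma_{i}(\langle x,y\rangle)$, which converts a product of $p$-th powers into a single $p$-th power modulo higher commutators.

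The heart of the matter is the smallest-exponent case of part (2), namely the fundamental identity
\[
[N^{p},G]=[N,G]^{p}=[N,G^{p}].
\]
To obtain $[N^{p},G]\le[N,G]^{p}$ I would expand $[x^{p},g]$ for $x\in N$, $g\in G$ via Hall--Petrescu: the leading factor is $[x,g]^{p}$, and every remaining factor is a $p$-power of, or an iterated commutator in, elements of $[N,G]$. Using that $N$ is powerfully (resp.\ PF-) embedded, one checks these error terms already lie in $[N,G]^{p}$, and passing to the quotient $G/[N,G]^{p}$ lets the inductive hypothesis collapse what remains. The reverse inclusion and the symmetric statement $[N,G^{p}]=[N,G]^{p}$ follow by the same bookkeeping. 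Once this base identity is available, the full part (2), $[N^{p^{i}},G^{p^{j}}]^{p^{k}}=[N,G]^{p^{i+j+k}}$, follows by iterating it, using that $N^{p}$ is itself powerfully embedded (from part (1)) to relaunch the argument, and inducting on $i+j+k$.

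With part (2) in hand, part (1) is short. For $[N,G]$: since $[N,G]\le N^{p}$ we get $[[N,G],G]\le[N^{p},G]=[N,G]^{p}$ by the base identity, which is precisely the powerful-embedding condition for $[N,G]$. For $N^{p}$: one must check $[N^{p},G]\le(N^{p})^{p}=N^{p^{2}}$, and indeed $[N^{p},G]=[N,G]^{p}\le(N^{p})^{p}$ because $[N,G]\le N^{p}$. The $p=2$ variants simply replace the exponent $p$ by $4$ and go through identically, while the PF versions use the potent-filtration inclusion $[G_{i},{}_{p-1}G]\le G_{i+1}^{p}$ in place of powerful embedding. Finally, part (3) is proved by induction on $i$: a product $x^{p^{i}}y^{p^{i}}$ equals $(xy)^{p^{i}}$ times Hall--Petrescu correction terms which, by the divisibilities $p\mid\binom{p}{j}$ and by part (2), lie in the appropriate power subgroup and can be reabsorbed into a single $p^{i}$-th power, so the set of $p^{i}$-th powers is already the subgroup $N^{p^{i}}$.

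I expect the main obstacle to be the base identity $[N^{p},G]=[N,G]^{p}$: controlling the Hall--Petrescu error terms requires knowing \emph{a priori} that commutators and $p$-th powers behave well in the relevant quotients, which is exactly why the three statements cannot be proved in sequence but must share a single interlocking induction. The $p=2$ case, where the embedding condition involves $N^{4}$ rather than $N^{2}$, demands separate attention at each step, and the PF case requires systematically replacing single commutators $[N,G]$ by the $(p-1)$-fold commutators $[N,{}_{p-1}G]$ throughout, following Fern\'andez-Alcober, Gonz\'alez-S\'anchez and Jaikin-Zapirain \cite{Fer08}.
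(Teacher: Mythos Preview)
The paper does not give a proof of this lemma; it is stated with the bracketed citation \cite{Lub87, Fer08} and used as a black box. What you have written is essentially a reconstruction of the original arguments in those references: Lubotzky and Mann prove the powerful case by precisely the interlocking induction on $|G|$ you describe, passing to quotients such as $G/[N,G,G][N,G]^{p}$ so that the Hall--Petrescu correction terms vanish, and then bootstrapping the base identity $[N^{p},G]=[N,G]^{p}$ to the full statement; Fern\'andez-Alcober, Gonz\'alez-S\'anchez and Jaikin-Zapirain run the same scheme with $(p-1)$-fold commutators for the PF case. So there is nothing to compare against in the present paper --- your outline simply reproduces the cited proofs, and it is correct in shape.

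One small caution: in the powerful case the logical order in \cite{Lub87} is to establish that $[N,G]$ is powerfully embedded \emph{first} (by a direct reduction modulo $[N,G,G,G][N,G,G]^{p}$), and only then derive the identity $[N^{p},G]=[N,G]^{p}$; your sketch reverses this, deriving (1) from the base case of (2). That reversal is harmless provided your induction on $|G|$ is set up so that all three statements are assumed simultaneously for proper quotients, as you indicate, but when you write out the details you should check carefully that the Hall--Petrescu term $c_{p}\in\gamma_{p}(\langle x,[x,g]\rangle)$ --- the one not carrying a binomial coefficient divisible by $p$ --- really does land in $[N,G]^{p}$ without already invoking the conclusion for the current group.
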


\begin{lemma}
  \label{l:powclass}
  Let $G$ be a finite $p$-group and suppose that $\gamma _{c+1}(G)^{p^e}=1$.
  \begin{enumerate}
    \item If $G$ is a powerful group, then it is nilpotent of class $\le c+e$.
    \item If $G$ is a PF-group, then it is nilpotent of class $\le c+e(p-1)$.
  \end{enumerate}
\end{lemma}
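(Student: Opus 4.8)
The plan is to show, in both cases, that the high terms of the lower central series of $G$ get absorbed into powers of $\gamma_{c+1}(G)$, after which the hypothesis $\gamma_{c+1}(G)^{p^e}=1$ closes the argument. Concretely, writing $N=\gamma_{c+1}(G)$, I would prove the inclusions
\[
  \gamma_{c+1+j}(G)\le N^{p^j}\quad(\text{powerful case}),\qquad
  \gamma_{c+1+j(p-1)}(G)\le N^{p^j}\quad(\text{PF case})
\]
for all $j\ge 0$, by induction on $j$. Taking $j=e$ then yields $\gamma_{c+1+e}(G)\le N^{p^e}=1$ and $\gamma_{c+1+e(p-1)}(G)\le N^{p^e}=1$ respectively, which are exactly the two asserted class bounds.

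The base of the induction ($j=0$) is trivial, but it rests on a structural input: I first observe that $N=\gamma_{c+1}(G)$ is powerfully embedded (resp. PF-embedded) in $G$. Indeed $G$ is powerfully embedded (resp. PF-embedded) in itself by the very definition of a powerful (resp. PF) group, and by Lemma \ref{l:pepf}(1) the commutator $[M,G]$ is again powerfully embedded (resp. PF-embedded) whenever $M$ is; iterating this $c$ times starting from $M=G$ shows that $\gamma_{c+1}(G)$ is embedded. The same reasoning applied to iterated commutators shows that \emph{every} subgroup $\gamma_{c+1+j}(G)$ (resp. $\gamma_{c+1+j(p-1)}(G)$) occurring in the induction is itself powerfully embedded (resp. PF-embedded), which is precisely what permits the inductive step to be re-applied at each stage.

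For the inductive step I would invoke the embedding inequality in the form $[M,G]\le M^p$ for a powerfully embedded $M$ (for $p=2$ this follows from $[M,G]\le M^4\le M^2$), and $[M,{}_{p-1}G]\le M^p$ for a PF-embedded $M$ (which holds since, if the potent filtration passes through $M=H_i$, then $[M,{}_{p-1}G]\le H_{i+1}^p\le H_i^p=M^p$). Assuming $\gamma_{c+1+j}(G)\le N^{p^j}$ with $\gamma_{c+1+j}(G)$ powerfully embedded, one obtains
\[
  \gamma_{c+2+j}(G)=[\gamma_{c+1+j}(G),G]\le \gamma_{c+1+j}(G)^{p}\le \bigl(N^{p^j}\bigr)^{p}=N^{p^{j+1}},
\]
and analogously $\gamma_{c+1+(j+1)(p-1)}(G)=[\gamma_{c+1+j(p-1)}(G),{}_{p-1}G]\le N^{p^{j+1}}$ in the PF case. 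The identity $(N^{p^j})^{p}=N^{p^{j+1}}$ used here is the power-regularity of embedded subgroups recorded in Lemma \ref{l:pepf}(2)--(3). I expect the points requiring the most care to be exactly these power manipulations and the verification that the repeated commutator subgroups stay embedded throughout; once those are secured the induction closes immediately and both class bounds follow.
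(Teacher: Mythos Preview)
Your proof is correct and follows essentially the same strategy as the paper: use Lemma~\ref{l:pepf}(1) to see that each relevant $\gamma_k(G)$ is powerfully (resp.\ PF-) embedded, then iterate the basic embedding inequality to push $\gamma_{c+1+j}(G)$ (resp.\ $\gamma_{c+1+j(p-1)}(G)$) into $\gamma_{c+1}(G)^{p^j}$. The only cosmetic difference is that the paper phrases the induction in terms of the exponent of $\gamma_{c+i(p-1)+1}(G)$ rather than your containment $\gamma_{c+1+j(p-1)}(G)\le N^{p^j}$, but the two formulations are equivalent.
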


\proof
We prove only the second part, the first one follows along the same lines.
Let $G$ be a PF $p$-group. Then Lemma \ref{l:pepf} implies that $\gamma _{c+1}(G)$ is PF-embedded in $G$. Thus there exists a series
$\gamma_{c+1}(G)=N_1\ge N_2\ge\cdots \ge N_k=1$, where $N_i$ are normal subgroups of $G$, $[N_i,G]\le N_{i+1}$ and $[N_i,{}_{p-1}G]\le N_{i+1}^p$ for all $i$.
By definition, $\gamma_{c+p}(G)\le N_2^p$, and it follows that $\exp\gamma_{c+p}(G)$ divides $p^{e-1}$. Induction now shows that
$\gamma _{c+i(p-1)+1}(G)^{p^{e-i}}=1$, therefore
$\gamma _{c+e(p-1)+1}(G)=1$, as required.
\endproof

Let $G$ be a group and let $f[X]\in \mathbb{Z}[X]$ be a polynomial. Then we say \cite{Tra05} that $G$ is {\it $f$-Milnor} if
$$a^{f(t)}\equiv 1 \mod \left ( \langle a\rangle ^{\langle t\rangle}\right )^{\rm ab}$$
for all $a,t\in G$.

\begin{proof}[Proof of Theorem \ref{t:powerful}]
  Let $\V$ be a variety that does not contain all metabelian groups. By \cite[Theorem 4.9]{Tra05}, there exists a non-zero polynomial $f[X]\in\mathbb{Z}[X]$ such that all groups in $\V$ are $f$-Milnor. By \cite[Theorem 3.19]{Tra05} there exist constants $c=c(f)$ and $e=e(f)$ such that $\gamma _{c+1}(G^e)=1$ for all finite groups $G\in\V$. If $G$ is a powerful $p$-group (or a PF $p$-group) in $\V$,
  then the latter implies $\gamma _{c+1}(G)^{p^v}=1$, where $p^v$ is the largest $p$-power dividing $e$. By Lemma \ref{l:powclass}, the class of $G$ is thus bounded by a constant depending only on $\V$ (and $p$ in the case of PF $p$-groups). This shows that (1) implies (2) and (3).

  As all finite metacyclic $p$-groups, where $p$ is odd, are powerful \cite{Lub87}, it is clear that (2) implies (1), and similarly (3) implies (1). This completes the proof.
\end{proof}

\section{Finite $p$-groups of fixed coclass}
\label{s:coclass}

\noindent
Before proving Theorem \ref{t:coclass} we recall some concepts and results on coclass. We refer to \cite{Lee02} and \cite[Chapter 10]{Dix91}.

A fundamental tool in coclass theory is the so-called {\it coclass graph} $\Gamma(p,r)$ constructed as follows. The vertices of $\Gamma(p,r)$ correspond to isomorphism types of finite $p$-groups of coclass $r$. Two groups $G$ and $H$ in $\Gamma(p,r)$ are connected by a directed edge $G\to H$ if there exists a normal subgroup $N$ of $H$ with $|N|=p$ and $H/N\cong G$. In this case, $N$ is clearly the last non-trivial term of the lower central series of $H$.

Let $G$ and $H$ be two vertices of $\Gamma(p,r)$. We say that $H$ is a {\it descendant} of $G$ if there is a directed path from $G$ to $H$ in  $\Gamma(p,r)$. The subgraph $\mathcal{T}_G(p,r)$ of $\Gamma(p,r)$ generated by all descendants of $G$ is a tree; it is called a {\it coclass tree} if it contains precisely one infinite path starting with $G$ and is maximal with respect to this property. One of the key results in the theory of coclass is that $\Gamma(p,r)$ consists of finitely many coclass trees and finitely many groups not belonging to coclass trees.

If $\mathcal{T}=\mathcal{T}_G(p,r)$ is a coclass tree with the root $G$, then the unique infinite path in $\mathcal{T}$ starting with $G$ is called the {\it main line} of $\mathcal{T}$. The groups on the main line form an inverse system of finite $p$-groups. Its inverse limit is an infinite pro-$p$ group of coclass $r$. It turns out that infinite pro-$p$ groups of coclass $r$ correspond to the main lines of coclass trees in $\Gamma (p,r)$, and thus there is only finitely many of them.

Let $\mathcal{T}$ be a coclass tree, as above, with the main line
$G=G_1\to G_2\to\cdots$. Then the subgraph of $\mathcal{T}$ generated by all descendants of $G_i$ which are not descendants of $G_{i+1}$ is called the {\it $i$-th branch} $\mathcal{T}_i$ of $\mathcal{T}$. Clearly, $\mathcal{T}_i$ is a finite tree with root $G_i$.

\begin{proof}[Proof of Theorem \ref{t:coclass}]
  Note that every finite metabelian $p$-group of coclass 1 belongs to $\A\A_p$, see \cite[p. 52]{Lee02}. This shows that (2) implies (1).

  Suppose now that (1) holds. Fix a prime $p$ and positive integer $r$. Let $\V_{p,r}$ be the subvariety of $\V$ generated by all finite $p$-groups of coclass $r$ in $\V$. By \cite[Corollary 6.4.6 (Conjecture B)]{Lee02}, the derived length of $p$-groups of coclass $r$ can be bounded by an absolute constant depending on $p$ and $r$ only, therefore $\V_{p,r}$ is a solvable variety. By Theorem \ref{t:groves}, there exist constants $e$ and $c$, depending on $\V$, $p$ and $r$, such that $\V_{p,r}\subseteq \B_{p^e}\N_c$. Thus, if $G$ is a finite $p$-group of coclass $r$ that belongs to $\V$, then
  $\gamma _{c+1}(G)^{p^e}=1$.

  Assume first that $p$ is odd. We may assume that $|G|\ge p^{2p^r+r}$. Denote $m=p^r-p^{r-1}$ and $d=d(\gamma _m(G))$. Then a result of Shalev \cite[Theorem 1.2]{Sha94} implies that $\gamma_m(G)$ is powerful, $d=(p-1)p^s$ for some $0\le s\le r-1$, and $\gamma_i(G)^p=\gamma _{i+d}(G)$ for all $i\ge m$. Also, $\gamma _{2(m-1)}(G)$ is nilpotent of class $\le 2$ by \cite[Theorem 1.1]{Sha94},
  hence it is regular. Therefore, if $k\ge 2(m-1)$, then induction on $j$ shows that $\gamma _k(G)^{p^j}=\gamma _{k+jd}(G)$. Thus if $k\ge \max\{ c+1,2(m-1)\}$, then $\gamma _{k+ed}(G)=1$. This shows that the nilpotency class of $G$ is bounded in terms of $\V$, $p$ and $r$, and therefore the order of $G$ is bounded.

  If $p=2$, the proof that $|G|$ is bounded in terms of $r$ and $\V$ is similar. First, we may assume that $|G|\ge 2^{2^{2r+5}}$. Then $\gamma _{7\cdot 2^r-2}(G)$ is abelian \cite[Theorem 1.7]{Sha94}. Set $m=2^{r+2}$ and $d=d(\gamma _m(G))$. Then it follows from \cite[Theorem 1.5]{Sha94} that $\gamma _m(G)$ is powerful, $d\le 2^{r+1}$, and $\gamma _i(G)^2=\gamma _{i+d}(G)$ for all $i\ge m$. As above we obtain that $\gamma _{k+ed}(G)=1$ for all $k\ge \max\{ c+1, 7\cdot 2^r-2\}$, and hence the order of $G$ is bounded in terms of $r$ and $\V$.

  We have therefore showed that (2) implies (1). It remains to prove that (2) and (3) are equivalent. It is obvious that (2) implies (3). Conversely suppose that (3) holds, and consider an infinite pro-$p$ group $S$ of coclass $r$. The corresponding coclass tree has the main line $S_i\to S_{i+1}\to\cdots$, where $S_i=S/\gamma _i(S)$ and $i$ is large enough. As $S\notin\V$, there exists an integer $i_0\ge i$ such that $S_j\notin\V$ for all $j\ge i_0$. Let $\mathcal{T}_j$ be the branch of $\mathcal{T}$ with root $S_j$. If $G\in\mathcal{T}_j$ and $j\ge i_0$, then $G\not\in \V$. This shows that only finitely many vertices of a coclass tree belong to $\V$. As there are only finitely many coclass trees in $\Gamma(p,r)$ and finitely many $p$-groups of coclass $r$ outside these coclass trees, it follows that (3) implies (2). The proof is complete.
\end{proof}

\end{document}